\newtheorem{prop}{Proposition}[section]
\newtheorem{lemma}[prop]{Lemma}
\newtheorem{defi}[prop]{Definition}
\newtheorem{thm}[prop]{Theorem}
\newtheorem{coro}[prop]{Corollary}
\def\d{\mathrm{d}}
\def\rk{\mathrm{rk}}
\def\Hol{\mathrm{Hol}}
\def\e{\varepsilon}
\def\f{\varphi}
\def\SU{\mathrm{SU}}
\def\U{\mathrm{U}}
\def\G{\mathrm{G}}
\def\SO{\mathrm{SO}}
\def\O{\mathrm{O}}
\def\Sp{\mathrm{Sp}}
\def\Ric{\mathrm{Ric}}
\def\Scal{\mathrm{Scal}}
\def\Spin{\mathrm{Spin}}
\def\grad{\mathrm{grad}}
\def\T{\mathrm{T}}
\def\Ad{\mathrm{Ad}}
\def\RM{\mathbb{R}}
\def\RM{\mathbb{R}}
\def\ZM{\mathbb{Z}}
\begin{document}

\title{Conformally related Riemannian metrics with non-generic holonomy}
\author{Andrei Moroianu}

\address{Andrei Moroianu \\ Laboratoire de Math\'ematiques de Versailles, UVSQ, CNRS, Universit\'e Paris-Saclay, 78035 Versailles, France }
\email{andrei.moroianu@math.cnrs.fr}
\date{\today}

\begin{abstract}
We show that if a compact connected $n$-dimensional manifold $M$ has a conformal class containing two non-homothetic metrics $g$ and $\tilde g=e^{2\f}g$ with non-generic holonomy, then after passing to a finite covering, either $n=4$ and $(M,g,\tilde g)$ is an ambik\"ahler manifold, or $n\ge 6$ is even and $(M,g,\tilde g)$ is obtained by the Calabi Ansatz from a polarized Hodge manifold of dimension $n-2$, or both $g$ and $\tilde g$ have reducible holonomy, $M$ is locally diffeomorphic to a product $M_1\times M_2\times M_3$, the metrics $g$ and $\tilde g$ can be written as $g=g_1+g_2+e^{-2\f}g_3$ and $\tilde g=e^{2\f}(g_1+g_2)+g_3$ for some Riemannian metrics $g_i$ on $M_i$, and $\f$ is the pull-back of a non-constant function on $M_2$.
\end{abstract}
\maketitle

\section{Introduction}

A connected Riemannian manifold $(M,g)$ of dimension $n\ge 2$ has {\em non-generic} (or reduced) holonomy if the restricted holonomy group $\Hol_0(M,g)$ of its Levi-Civita connection is strictly contained in $\SO(n)$. The Berger-Simons holonomy theorem roughly says that a Riemannian manifold with non-generic holonomy is either reducible (i.e. locally isometric to a Riemannian product), or locally symmetric (i.e. has parallel curvature) or else its restricted holonomy group is conjugate to one of the groups of the so-called Berger list (see Section \ref{s2} below for details).

The property of having non-generic holonomy is not preserved by conformal changes of the metric, except for constant (or homothetic) metric changes. Indeed, homothetic metrics have the same Levi-Civita connections, thus if $g$ has reduced holonomy, $\lambda g$ has reduced holonomy too, for every positive constant $\lambda$. 

Besides this trivial example, which will be excluded in the sequel, there are only few instances where a given conformal class on a compact manifold contains two non-homothetic Riemannian metrics which have both reduced holonomy. One of them was recently considered in \cite{mmp}, where a classification of compact manifolds carrying two non-homothetic conformally related K\"ahler metrics was obtained. The only solutions to this problem are ambik\"ahler structures (cf. \cite{acg}) in dimension $4$, or obtained by the so-called Calabi Ansatz, as $S^2$-bundles over polarized Hodge manifolds in dimensions at least $6$ (cf. \cite[Thm. 1.1]{mmp}).

Another class of examples can be constructed on products of three manifolds as follows. Let $(M_i,g_i)$ be Riemannian manifolds for $i=1,2,3$ and let $\f$ be a non-constant smooth function on $M_2$, identified with a function on $M:=M_1\times M_2\times M_3$ by pull-back. Then the Riemannian metrics $g:=g_1+(g_2+e^{-2\f}g_3)$ and $\tilde g:=e^{2\f}(g_1+g_2)+g_3$ on $M$ are conformally related, non-homothetic, and have both reducible holonomy, being product metrics. The Riemannian manifolds obtained in this way will be called {\em triple warped products} in this article.

Our main result is to show that the above examples exhaust the list of possible conformal classes on compact manifolds containing two non-homothetic metrics with reduced holonomy. More precisely, we show in Theorem \ref{main} that if $(M,g)$ is a compact Riemannian manifold, $\f\in\mathcal{C}^\infty(M)$ is non-constant and $(M,g)$ and $(M,e^{2\f}g)$ have both non-generic holonomy, then either the two metrics are both (locally) isometric to Riemannian product metrics, or else a finite covering of the manifold is ambik\"ahler or is given by the Calabi Ansatz. The reducible case is studied further in Theorem \ref{reducible} where we show that every compact manifold with two conformally related non-homothetic reducible metrics is locally isometric to a triple warped product.

The proofs go roughly as follows: assume that $M$ is compact, and both $(M,g)$ and $(M,\tilde g)$ have non-generic holonomy, with $\tilde g=e^{2\f}g$ for some non-constant smooth function $\f$. If $g$ and $\tilde g$ are both irreducible, then the Berger-Simons theorem shows that either the two metrics $g$ and $\tilde g$ are both Einstein, which is impossible by Corollary \ref{c1}, or one is K\"ahler and the other one has reduced holonomy, in which case Theorem 1.3 in \cite{mmp} applies, or one of the two metrics, say $g$, is reducible, and $\tilde g$ is Einstein irreducible. In this case $(M,g)$ has, up to a finite covering, two parallel orthogonal distributions, whose volume forms define {\em conformal Killing forms} $\tilde\omega_1$ and $\tilde\omega_2$ on the irreducible manifold $(M,g)$. Moreover, a result of Cleyton \cite{cleyton}, also proved by K\"uhnel and Rademacher \cite{kr}, shows that the conformal change of the metric only depends on one of the factors. This implies that either $\tilde\omega_1$ or $\tilde\omega_2$ is actually a {\em Killing form} on $(M,\tilde g)$ and one can apply the classification of Killing forms on manifolds with special holonomy obtained in \cite{bms} for locally symmetric spaces, in \cite{ms2} for quaternionic-K\"ahler manifolds, and in \cite{uwe1} for $G_2$ and $\Spin(7)$ manifolds.

In the case where both metrics have reducible holonomy, the key ingredient is the classification of conformal Killing forms on compact Riemannian products \cite{ms}. After passing to a finite covering, one may assume that $(M,g)$ has non-trivial, parallel, oriented distributions whose volume forms induce conformal Killing forms on $(M,\tilde g)$. Using a slight extension of \cite[Thm. 2.1]{ms}, one can show that every conformal Killing form on a reducible compact Riemannian manifold is a sum of parallel forms, Killing forms which are basic with respect to the parallel distributions, and Hodge duals of them. By doing this in both directions, and using some tricky topological arguments, one obtains that one of the $\nabla^g$-parallel distributions of $(M,g)$ is orthogonal to one of the $\nabla^{\tilde g}$-parallel distributions $(M,\tilde g)$, and the conformal change factor is constant in the direction of these two distributions, which eventually gives the desired form of the metric.

\section{Holonomy issues}\label{s2}

Since the terminology about holonomy groups is slightly confusing, let us start with giving some precise definitions.

Let $(M^n,g)$ be a connected Riemannian manifold with Levi-Civita connection $\nabla^g$. For every $x\in M$ and orthonormal frame $u:\mathbb{R}^n\to \T_x M$, the {\em holonomy group} $\Hol_u(M,g)$ is the subgroup of $\O(n)$ defined by $\Hol_u(M,g):=\Ad_u^{-1}(\Hol_x(\nabla^g))$, where $\Hol_x(\nabla^g)$ is the holonomy group of $\nabla^g$ at $x$ and $\Ad_u:\O(n)\to \mathrm{Iso}(\T_xM)$ is defined by 
$$\Ad_u(A):=u\circ A\circ u^{-1},\qquad\forall\ A\in \O(n).$$

Since all holonomy groups are conjugated, we will simply write $\Hol(M,g)\subset \O(n)$ by choosing one fixed frame $u$, being understood that $\Hol(M,g)$ is only defined up to conjugation (of course all statements below are invariant under conjugation).

\begin{defi} 1. The {\em restricted} holonomy group $\Hol_0(M,g)\subset \SO(n)$ is the connected component of the identity in $\Hol(M,g)$.

2. The metric $g$ has {\em reduced holonomy} if its restricted holonomy group $\Hol_0(M,g)$ is non-generic, i.e. strictly contained in $\SO(n)$.

3. The metric $g$ has {\em reducible holonomy} if the standard representation of $\Hol_0(M,g)$ on $\mathbb{R}^n$ is reducible. 
\end{defi}

Note that if $(\hat M,\hat g)$ denotes the universal cover of $(M,g)$, then $\Hol_0(M,g)=\Hol(\hat M,\hat g)$. 

We now introduce three disjoint classes of Riemannian manifolds with reduced holonomy which will be relevant for our study.

\begin{defi} A connected Riemannian manifold $(M^n,g)$ is called of:
\begin{itemize}
\item {\em Type K}, if $n$ is even, $\Hol_0(M,g)=\U(n/2)\subset \SO(n)$ and $(M,g)$ is not locally symmetric.
\item {\em Type E}, if $(M,g)$ is locally symmetric, irreducible, with non-constant sectional curvature, or if its restricted holonomy group $\Hol_0(M,g)$ is (conjugated to) one of the following subgroups of $\SO(n)$: $\SU(n/2)$ for $n$ even, $\Sp(n/4)$ for $n$ multiple of $4$, $\Sp(n/4)\Sp(1)$ for $n\ge 8$ and multiple of $4$, $\G_2$ for $n=7$ or $\Spin(7)$ for $n=8$.
\item {\em Type P}, if the metric $g$ has reducible holonomy.
\end{itemize}
\end{defi}

The terminology is justified by the fact that Riemannian manifolds of type K are (locally) K\"ahler, those of type E are Einstein, and those of type P are (locally) Riemannian products.

An immediate consequence of the Berger-Simons holonomy theorem \cite[p. 300]{besse} is that $(M,g)$ has reduced holonomy if and only if it is of one of the three types above. 

For later use, we now prove the following rather folklorical result:

\begin{lemma}\label{hol} Let $(M^n,g)$ be a compact Riemannian manifold of dimension $n\ge 2$. 
\begin{itemize}
\item[$(i)$] If $n=2m$ and $\Hol_0(M,g)=\U(m)$ then either $(M,g)$ or a double covering of it has full holonomy equal to $\U(m)$.
\item[$(ii)$] If $n=4q$ and $\Hol_0(M,g)=\Sp(q)\Sp(1)$, then either $(M,g)$ or a double covering of it has full holonomy equal to $\Sp(q)\Sp(1)$.
\item[$(iii)$] If $\Hol_0(M,g)$ is reducible, then a finite covering of $(M,g)$ has a non-trivial parallel distribution.
\end{itemize}
\end{lemma}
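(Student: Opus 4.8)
The plan is to derive all three statements from one elementary mechanism. The restricted holonomy group $\Hol_0(M,g)$ is a closed normal subgroup of $\Hol(M,g)$; the natural map $\rho\colon\pi_1(M)\to\Hol(M,g)/\Hol_0(M,g)$ is surjective; and for every subgroup $H$ with $\Hol_0(M,g)\subseteq H\subseteq\Hol(M,g)$ of finite index, the covering $p\colon M'\to M$ corresponding to the finite-index subgroup $\rho^{-1}\big(H/\Hol_0(M,g)\big)$ of $\pi_1(M)$ is finite and satisfies $\Hol(M',p^*g)=H$: indeed $\Hol_0$ is unchanged under coverings, the holonomy homomorphism of $M'$ is the restriction of $\rho$, and $\rho\big(\rho^{-1}(H/\Hol_0(M,g))\big)=H/\Hol_0(M,g)$ because $\rho$ is onto. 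So in each part it suffices to locate $\Hol(M,g)$ inside a suitable finite extension of $\Hol_0(M,g)$.

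For $(i)$ and $(ii)$ I would use that $\Hol(M,g)$ normalises its identity component, hence lies in $N:=N_{\O(n)}(\Hol_0(M,g))$, so that $\Hol(M,g)/\Hol_0(M,g)$ embeds into $N/\Hol_0(M,g)$. For $\Hol_0(M,g)=\U(m)\subset\SO(2m)$ one computes that the centraliser of $\U(m)$ in $\O(2m)$ is the circle of unit complex scalars (as $\mathbb{C}^m$ is a complex-irreducible $\U(m)$-module of complex type), while every automorphism of $\U(m)$ is inner or the product of an inner one with complex conjugation, the latter realised by an element of $\O(2m)$; thus $N/\U(m)\cong\mathrm{Out}(\U(m))\cong\ZM_2$. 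For $\Hol_0(M,g)=\Sp(q)\Sp(1)\subset\SO(4q)$ the analogous bookkeeping (centraliser $\{\pm\mathrm{Id}\}\subset\Sp(q)\Sp(1)$, trivial outer automorphism group) shows $|N/\Sp(q)\Sp(1)|\le 2$. In both cases, if $\Hol(M,g)=\Hol_0(M,g)$ we are done with $M$ itself; otherwise $\Hol(M,g)/\Hol_0(M,g)\cong\ZM_2$, and the mechanism above applied to $H=\Hol_0(M,g)$ produces a double covering with full holonomy $\Hol_0(M,g)$.

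For $(iii)$ I would replace the normaliser by the de~Rham data. Write $\RM^n=V_0\oplus V_1\oplus\dots\oplus V_k$, where $V_0$ is the fixed subspace of $\Hol_0(M,g)$ and $V_1,\dots,V_k$ are the non-trivial irreducible summands, so that $\Hol_0(M,g)=G_1\times\dots\times G_k$ with $G_i$ acting faithfully and irreducibly on $V_i$ and trivially on the other summands. If $k=0$ the metric is flat, $\Hol(M,g)$ is finite by Bieberbach's theorem, and the mechanism with $H=\Hol_0(M,g)=\{\mathrm{Id}\}$ gives a finite covering with trivial holonomy, on which every distribution is parallel. If $k\ge 1$, the $V_i$ are pairwise non-isomorphic as $\Hol_0(M,g)$-modules (the kernel of the action on $V_i$ is $\prod_{l\ne i}G_l$, and these subgroups are distinct), so any irreducible non-trivial $\Hol_0(M,g)$-submodule of $\RM^n$ must coincide with one of the $V_i$. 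Since each $h\in\Hol(M,g)$ normalises $\Hol_0(M,g)$, it carries each $V_i$ to such a submodule and fixes $V_0$; hence $\Hol(M,g)$ permutes $\{V_1,\dots,V_k\}$, giving a homomorphism $\Hol(M,g)\to S_k$ with kernel $H\supseteq\Hol_0(M,g)$ of finite index. The associated finite covering $M'$ has holonomy $H$, which preserves $V_1$, and $V_1$ is a proper non-zero subspace (if $k\ge 2$ then $V_2\ne 0$, and if $k=1$ then reducibility of $\Hol_0(M,g)$ forces $V_0\ne 0$); so $V_1$ defines a non-trivial parallel distribution on $M'$.

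The step I expect to require the most care is the combinatorial core of $(iii)$: one must be sure that distinct de~Rham factors are genuinely non-isomorphic as holonomy representations, so that $\Hol(M,g)$ cannot act on a multiplicity space and is forced to permute the factors, and one must keep careful track of the identity $\Hol(M',p^*g)=H$ under coverings (that no holonomy is created and none is lost). The normaliser computations behind $(i)$ and $(ii)$ are routine once the representation theory of $\U(m)$ and $\Sp(q)\Sp(1)$ is recalled, and compactness of $M$ is used only to invoke Bieberbach's theorem in the flat case of $(iii)$.
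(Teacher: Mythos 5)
Your proof is correct and follows essentially the same route as the paper: the surjection $\pi_1(M)\to\Hol(M,g)/\Hol_0(M,g)$ together with passing to the covering associated to the preimage of a finite-index subgroup is exactly the mechanism the paper uses implicitly through the homomorphisms $\e$ and $\sigma$ on $\pi_1(M)$, and your treatment of $(iii)$ (the holonomy group permutes the pairwise non-isomorphic non-trivial de Rham summands, and the kernel of the resulting map to the symmetric group gives the finite cover) matches the paper's argument. The only cosmetic difference is in $(i)$ and $(ii)$, where you bound $\Hol(M,g)/\Hol_0(M,g)$ by the component group of $N_{\O(n)}(\Hol_0(M,g))$, while the paper equivalently uses that the space of $\Hol_0$-invariant $2$-forms (resp.\ $4$-forms) is one-dimensional, spanned by the K\"ahler (resp.\ Kraines) form, so that $\pi_1(M)$ acts on it by $\pm1$; since the normalizer is precisely the stabilizer of that line, the two computations carry the same content.
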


\begin{proof} $(i)$ The universal cover $(\hat M,\hat g)$ of $(M,g)$ has holonomy $\U(m)$. The fundamental group $\pi_1(M)$ acts on $(\hat M,\hat g)$ by isometries. Since the standard representation of $\U(m)$ on $\Lambda^2(\RM^{2m})$ has exactly one trivial summand (spanned by the fundamental 2-form), it follows that every parallel $2$-form on $(\hat M,\hat g)$ is a multiple of the K\"ahler form $\hat \Omega\in\Omega^2 M$. Consequently, there exists a group morphism $\e:\pi_1(M)\to \ZM/2$ such that for every $\gamma\in\pi_1(M)$ we have $\gamma^*\hat \Omega=\e(\gamma)\hat\Omega$. Then $\hat M/\ker(\e)$ has full holonomy $\U(m)$ and is either equal to $M$, if $\e$ is trivial, or to a two-sheeted covering of $M$, otherwise.

$(ii)$ The proof is similar, and follows from the fact that the standard representation of $\Sp(q)\Sp(1)$ on $\Lambda^4(\RM^{4q})$ has exactly one trivial summand, spanned by the so-called Kraines form, whose stabilizer in $\SO(4q)$ is $\Sp(q)\Sp(1)$.

$(iii)$ If $(M,g)$ is flat, by Bieberbach's theorem it is finitely covered by a flat torus (which has parallel distributions of any rank). We assume for the rest of the proof that $(M,g)$ is not flat.
The universal cover $(\hat M,\hat g)$ of $(M,g)$ has reducible holonomy, so by \cite[Theorem IV.5.4]{kn1}, the tangent bundle of $\hat M$ splits in an orthogonal direct sum $\T\hat M=T_0\oplus\ldots \oplus T_k$ of parallel sub-bundles and the holonomy group of $(\hat M,\hat g)$ satisfies $\Hol(\hat M,\hat g)=H_1\times \ldots\times H_k$, where for every $i,j\ge 1$ with $j\ne i$, $H_i$ acts irreducibly on $T_i$ and trivially on $T_j$ ($T_0$ being the flat component).  Moreover this
decomposition is unique up to a permutation of the set $\{1,\ldots,k\}$ (such permutations may occur if some of the $H_i$-representations on $T_i$ are isomorphic).
Note that $T_0$ may or may not be reduced to $0$, but by our non-flatness assumption, there are at least two non-trivial summands $T_i$ in the above decomposition.

The elements of the fundamental group $\pi_1(M)$ act on $(\hat M,\hat g)$ by isometries, so there exists a group morphism $\sigma:\pi_1(M)\to \mathfrak{S}_k$ such that $\gamma_*(T_i)=T_{\sigma(\gamma)(i)}$ for every $\gamma\in\pi_1(M)$. Every $\gamma\in\ker(\sigma)$ preserves the decomposition $\T\hat M=T_0\oplus\ldots \oplus T_k$, so the parallel distributions $T_i$ define parallel distributions on $(\hat M,\hat g)/\ker(\sigma)$, which is a covering of $(M,g)$ with finite deck transformation group $\pi_1(M)/\ker(\sigma)$, isomorphic to a subgroup of $\mathfrak{S}_k$.
\end{proof}

\section{Conformal Killing vector fields} \label{sconf}

Let $(M,g)$ be a connected Riemannian manifold of dimension $n$. A vector field $\xi$ on $M$ is called {\it conformal Killing} if the trace-free part $\left(\mathcal{L} _{\xi} g\right) _0$ of the Lie derivative $\mathcal{L} _{\xi} g$ of $g$ along $\xi$ vanishes, or equivalently, if $\mathcal{L} _{\xi} g = f \, g$ for some function $f$, depending on $\xi$ and $g$. This condition is clearly independent on conformal changes of the metric. 
A vector field $\xi$ is called {\em Killing} with respect to the Riemannian metric $g$ if $\mathcal{L} _{\xi} g = 0$. 

The following result although not explicitely stated, is due to Lichnerowicz \cite[\textsection 85]{l}, Obata \cite{obata}, Nagano and Yano \cite{n,ny}, cf. also \cite[Prop. 2.2]{gm} for a short proof:

\begin{prop}    \label{p1}
Assume that $(M^n,g)$ is a compact oriented Einstein manifold carrying a conformal vector field which is not Killing. Then $(M,g)$ is, up to constant rescaling, isometric to the round sphere $\mathbb{S}^n$.
\end{prop}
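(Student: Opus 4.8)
The plan is to run the classical Obata--Lichnerowicz argument, which reduces the statement to Obata's rigidity theorem for the round sphere. Since $\xi$ is conformal we have $\mathcal{L}_\xi g=2\sigma g$ with $\sigma=\tfrac1n\mathrm{div}(\xi)$, and $\int_M\mathrm{div}(\xi)\,\d v_g=0$ by the divergence theorem, so $\sigma$ has vanishing mean. Consequently $\xi$ is not Killing if and only if $\sigma\not\equiv0$, and in that case, having zero mean, $\sigma$ is non-constant. (In dimension $2$ every metric formally satisfies $\Ric=\tfrac12\Scal\,g$; as is customary we read ``Einstein'' there as ``constant curvature'', in which case the assertion is classical. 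So I assume from now on that $n\ge3$.)

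The heart of the proof is to show that $\sigma$ solves the Obata equation
\[
\mathrm{Hess}(\sigma)=-\frac{\Scal}{n(n-1)}\,\sigma\,g.
\]
Because $n\ge3$ and $(M,g)$ is Einstein, Schur's lemma gives that $\Scal$ is constant and $\Ric=\tfrac{\Scal}{n}g$. Now I compute $\mathcal{L}_\xi\Ric$ in two ways. On one hand $\mathcal{L}_\xi\Ric=\tfrac{\Scal}{n}\,\mathcal{L}_\xi g=\tfrac{2\Scal}{n}\,\sigma\,g$. On the other hand, differentiating the conformal Killing equation once more and contracting with the help of the Ricci identity yields the classical integrability relation
\[
\mathcal{L}_\xi\Ric=-(n-2)\,\mathrm{Hess}(\sigma)-(\Delta\sigma)\,g,
\]
where $\Delta$ denotes the trace of the Hessian; this is a routine computation, using that the symmetric and skew-symmetric parts of $\nabla\xi^\flat$ are $\sigma g$ and $\tfrac12\,\d\xi^\flat$ respectively, together with the formula for $\mathcal{L}_\xi$ of the Levi-Civita connection. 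Taking the $g$-trace of the equality of the two expressions gives $\Delta\sigma=-\tfrac{\Scal}{n-1}\sigma$, and substituting this back (here $n\ge 3$ is used to divide by $n-2$) produces the displayed Obata equation.

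It remains to discard the cases $\Scal\le0$. If $\Scal=0$ then $\mathrm{Hess}(\sigma)=0$, so $\grad\sigma$ is parallel, hence of constant length, which on a compact manifold forces $\sigma$ to be constant --- a contradiction. If $\Scal<0$ then $\Delta\sigma=-\tfrac{\Scal}{n-1}\sigma$ with a positive coefficient, which contradicts $\int_M\sigma\,\Delta\sigma\,\d v_g=-\int_M|\grad\sigma|^2\,\d v_g\le0$ since $\sigma\not\equiv0$. Hence $\Scal>0$, and $\sigma$ is a non-constant function satisfying $\mathrm{Hess}(\sigma)=-c^2\sigma g$ with $c^2=\tfrac{\Scal}{n(n-1)}>0$. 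By Obata's theorem, $(M,g)$ is then isometric to the round sphere of radius $1/c$, hence to $\mathbb{S}^n$ up to constant rescaling. The one substantial ingredient is precisely this final step, Obata's sphere rigidity theorem; everything preceding it is the identification of $\sigma$ as a solution of the appropriate overdetermined equation and the verification that the Einstein constant is necessarily positive.
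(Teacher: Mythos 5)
Your argument is correct and is exactly the classical Lichnerowicz--Obata argument that the paper itself invokes by citation (the proposition is stated without proof, with references to Lichnerowicz, Obata, Nagano--Yano and to \cite{gm} for a short proof along precisely these lines): derive the Obata equation $\mathrm{Hess}(\sigma)=-\tfrac{\Scal}{n(n-1)}\sigma g$ for $\sigma=\tfrac1n\mathrm{div}(\xi)$, rule out $\Scal\le 0$ by compactness, and conclude by Obata's rigidity theorem. No gaps worth flagging; the only compressed steps (the Lichnerowicz integrability relation for $\mathcal{L}_\xi\Ric$ and the two-dimensional case) are standard and correctly handled.
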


\begin{coro}\label{c1}
Assume that $M^n$ $(n\ge 2)$ is a compact manifold carrying two conformally related metrics $g$ and $\tilde g=e^{2\f}g$ which are both Einstein. Then, either $\f$ is constant (i.e. the two metrics are homothetic), or $(M,g)$ is homothetic to the round sphere $\mathbb{S}^n$.
\end{coro}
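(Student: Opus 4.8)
The plan is to reduce the statement to Proposition \ref{p1} by exhibiting, whenever $\f$ is non-constant, a conformal vector field on $(M,g)$ that is not Killing. Assume first that $n\ge 3$. Taking the $g$-trace-free part (denoted by a subscript $0$) of the conformal transformation law of the Ricci tensor under $\tilde g=e^{2\f}g$ (see \cite{besse}) gives
\[
(\widetilde{\Ric})_0=\Ric_0-(n-2)\,\bigl(\nabla^g\d\f-\d\f\otimes\d\f\bigr)_0 .
\]
Now $g$ is Einstein, so $\Ric_0=0$; and $\tilde g=e^{2\f}g$ is Einstein, so $\widetilde{\Ric}$ is a pointwise multiple of $\tilde g$, hence of $g$, and therefore $(\widetilde{\Ric})_0=0$ as well. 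Consequently $\bigl(\nabla^g\d\f-\d\f\otimes\d\f\bigr)_0=0$.

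Set $u:=e^{-\f}$ and $\xi:=\grad_g u$. Since $\d u=-e^{-\f}\d\f$, the Leibniz rule yields $\nabla^g\d u=-e^{-\f}\bigl(\nabla^g\d\f-\d\f\otimes\d\f\bigr)$, whose $g$-trace-free part vanishes by the previous paragraph. As $\mathcal L_\xi g=2\,\nabla^g\d u$ for a gradient vector field, we obtain $(\mathcal L_\xi g)_0=0$, i.e.\ $\xi$ is a conformal vector field on $(M,g)$.

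Suppose now that $\f$ is non-constant, so that $u$ is non-constant and $\xi\neq 0$. Then $\xi$ is not Killing: if it were, $\nabla^g\d u=0$ would hold, so $\d u$ would be parallel with $|\d u|_g$ a positive constant, contradicting the vanishing of $\d u$ at a maximum point of $u$ on the compact manifold $M$. If $M$ is orientable, Proposition \ref{p1} applied to the compact oriented Einstein manifold $(M,g)$ and the non-Killing conformal field $\xi$ shows that $(M,g)$ is homothetic to the round sphere $\mathbb{S}^n$, as claimed. If $M$ is not orientable, the same reasoning on the oriented double cover $\pi:\hat M\to M$, which carries the non-Killing conformal field $\pi^*\xi$ associated with the non-constant function $\f\circ\pi$, shows that $(\hat M,\pi^*g)$ is homothetic to $\mathbb{S}^n$; its deck involution is then a free isometric involution of the round sphere, hence the antipodal map, so $M\cong\mathbb{RP}^n$. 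But $\xi$ would then be a non-Killing conformal field on $(\mathbb{RP}^n,g)$, contradicting the fact that the conformal group of $\mathbb{RP}^n$ equals its isometry group; so this case cannot occur. Finally, for $n=2$ both metrics have constant curvature, and the statement follows from the uniformization theorem together with the fact that the only compact surface carrying a non-Killing conformal vector field is $\mathbb{S}^2$ (the case $\mathbb{RP}^2$ being reduced to $\mathbb{S}^2$ as above).

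The only points that require some care are the choice of the conformal field — it is $\grad_g(e^{-\f})$, not $\grad_g\f$, that is conformal, the exponential being dictated precisely by the quadratic term $\d\f\otimes\d\f$ in the conformal transformation law of the Ricci tensor — and the disposal of the non-orientable case; everything else is a routine computation followed by a direct appeal to Proposition \ref{p1}.
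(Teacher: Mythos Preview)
Your proof is correct and follows essentially the same route as the paper: both use the conformal change formula for the trace-free Ricci tensor to show that the gradient of $e^{-\f}$ (equivalently, the metric dual of $e^{-\f}\d\f$) is a conformal vector field, and then appeal to Proposition~\ref{p1}. Your argument that $\xi$ is not Killing (a parallel $1$-form on a compact manifold must vanish at an extremum of its primitive) is a minor variant of the paper's (which instead integrates $\Delta^g\f+|\d\f|_g^2=0$ over $M$), and you are somewhat more careful than the paper about orientability. One small quibble: in your $n=2$ paragraph you invoke ``a non-Killing conformal vector field'', but no such field has been produced in that dimension, since the factor $(n-2)$ kills the Hessian identity; what actually settles $n=2$ is the uniqueness up to homothety of the constant-curvature metric in a given conformal class on a compact surface of non-positive Euler characteristic, which your reference to uniformization presumably intends.
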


\begin{proof} The formula relating the trace-free Ricci tensors $\mathrm{Ric}_0^g$ and $\mathrm{Ric}_0^{\tilde g}$ of $g$ and $\tilde g$ reads (cf. \cite[p. 59 e)]{besse}):
\begin{equation}\label{ricci}
\mathrm{Ric}_0^{\tilde g}=\mathrm{Ric}_0^g-(n-2)(\nabla^g\d \f-\d \f\otimes\d \f)-\frac{n-2}n(\Delta^g \f+|\d \f|_g^2)g.
\end{equation}
Since $g$ and $\tilde g$ are both Einstein, this gives 
$$\nabla^g\d \f-\d \f\otimes\d \f=f\, g,\qquad\hbox{where}\ f:=-\frac{1}n(\Delta^g \f+|\d \f|_g^2).$$
This relation can be equivalently written
\begin{equation}\label{ng}\nabla^g(e^{-\f}\d \f)=(e^{-\f}f)\, g.\end{equation}
Since for every $1$-form $\alpha$, the symmetric part of $\nabla^g\alpha$ is equal to $\frac12\mathcal{L} _{\alpha^\sharp} g$, \eqref{ng} shows that the metric dual $\xi$ of the 1-form $e^{-\f}\d \f$ is conformal Killing. By Proposition \ref{p1}, either $(M,g)$ is homothetic to $\mathbb{S}^n$, or $\xi$ is Killing. In the latter case, we get $e^{-\f}f=0$, thus $\Delta^g \f+|\d \f|_g^2=0$, which shows that $\d \f=0$ by integrating over $M$.
\end{proof}

For later use, we state here the following similar result:
\begin{lemma}\label{1e}
Let $(M^n,g)$ $(n\ge 2)$ be a compact Riemannian manifold carrying a non-trivial parallel vector field $\xi$. If the metric $\tilde g:=e^{2\f}g$ on $M$ is Einstein for some smooth function $\f$, then $\f$ is constant.
\end{lemma}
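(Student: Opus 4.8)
The plan is to use Proposition \ref{p1} to reduce to the case where $\xi$ is Killing for $\tilde g$, and then to extract a closed scalar elliptic equation for $\f$ that the maximum principle and an integration over $M$ will finish off.

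A parallel vector field is Killing for $g$, hence conformal Killing for $\tilde g$, this last property being conformally invariant. Replacing $M$ by its oriented double cover if necessary — which does not affect whether $\f$ is constant — Proposition \ref{p1} applies to the compact oriented Einstein manifold $(M,\tilde g)$ and shows that either $(M,\tilde g)$ is homothetic to the round sphere, or $\xi$ is Killing with respect to $\tilde g$. The first possibility is excluded, because the metric dual $\xi^\flat$ of the nonzero parallel vector field $\xi$ is a nonzero harmonic $1$-form on $(M,g)$, so $b_1(M)\ge 1$, whereas $\mathbb{S}^n$ is simply connected. Thus $\mathcal{L}_\xi\tilde g=0$, and since $\mathcal{L}_\xi(e^{2\f}g)=2(\xi\f)e^{2\f}g$ (using $\mathcal{L}_\xi g=0$), this forces $\xi\f\equiv 0$.

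Next, normalize $|\xi|_g=1$. As $\xi$ is parallel we have $\Ric^g(\xi,\cdot)=0$, hence $\Ric_0^g(\xi,\xi)=-\Scal^g/n$, and together with $\xi\f=0$ also $\nabla^g\d\f(\xi,\xi)=\xi(\xi\f)=0$. Evaluating \eqref{ricci} (with $\Ric_0^{\tilde g}=0$) on the pair $(\xi,\xi)$ therefore gives $\Scal^g=-(n-2)(\Delta^g\f+|\d\f|_g^2)$. On the other hand the classical conformal transformation law of the scalar curvature reads $\Scal^g=e^{2\f}\Scal^{\tilde g}-2(n-1)\Delta^g\f+(n-1)(n-2)|\d\f|_g^2$ (cf. \cite{besse}), and $\Scal^{\tilde g}=:S$ is a constant since $\tilde g$ is Einstein. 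Equating the two expressions for $\Scal^g$ and simplifying yields the pointwise identity
\begin{equation*}
\Delta^g\f=(n-2)\,|\d\f|_g^2+\tfrac{S}{n}\,e^{2\f}\qquad\text{on }M,
\end{equation*}
where $\Delta^g=(\nabla^g)^*\nabla^g$ is the non-negative Laplacian (the sign convention forced by \eqref{ricci}).

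It remains to apply the maximum principle and integration. At a point where $\f$ attains its maximum we have $\d\f=0$ and $\Delta^g\f\ge 0$, so the identity gives $S\ge 0$; integrating the identity over the compact manifold $M$ gives $\tfrac{S}{n}\int_M e^{2\f}+(n-2)\int_M|\d\f|_g^2=0$. Since both summands are non-negative, each vanishes, hence $S=0$ and, as $n\ge 3$, $\d\f\equiv 0$, i.e. $\f$ is constant. When $n=2$ the argument is even shorter: a compact Riemannian manifold with a parallel vector field has trivial restricted holonomy, hence is flat, so by Gauss--Bonnet the Einstein metric $\tilde g$ (which in dimension $2$ means constant curvature) is flat as well; the scalar curvature law above, now with $S=\Scal^g=0$, forces $\Delta^g\f=0$, so again $\f$ is constant. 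The point requiring the most care is to combine the two curvature identities so as to eliminate $\Scal^g$ — which is not a priori constant — leaving a closed equation for $\f$ alone; the sign conventions for the Laplacian and for the conformal transformation laws of $\Ric$ and $\Scal$ must be tracked carefully in that step, after which the conclusion is routine.
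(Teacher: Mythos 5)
Your proof is correct and follows essentially the same route as the paper: reduce to $\xi$ being Killing for $\tilde g$ via Proposition \ref{p1} and the first Betti number obstruction, evaluate the conformal Ricci formula on $(\xi,\xi)$ to express $\Scal^g$ in terms of $\f$, combine with the scalar curvature transformation law, and finish with the maximum principle and integration. The only cosmetic differences are that you use the maximum point plus integration (rather than both extrema) to get $\Scal^{\tilde g}=0$, and Gauss--Bonnet for $n=2$ where the paper simply notes $\f$ is harmonic.
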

\begin{proof} By replacing $M$ with a double cover if necessary, we may assume that it is oriented. The vector field $\xi$ is conformal Killing with respect to $\tilde g$, so either $(M,\tilde g)$ is homothetic to the standard sphere, or $\xi$ is Killing with respect to $\tilde g$. The former case is impossible since $g(\xi,\cdot)$ is a harmonic $1$-form on $(M,g)$, so the first Betti number of $M$ is non-vanishing. Consequently $0=\mathcal{L}_\xi\tilde g=2\xi(\f)e^{2\f} g$, showing that $\xi(\f)=0$. Denoting by $\eta:=\d\f^\sharp$ the dual vector field of $\d\f$ with respect to $g$, we have
$$\nabla^g_\xi\eta=\nabla^g_\xi\eta-\nabla^g_\eta\xi=\mathcal{L}_\xi\eta=(\mathcal{L}_\xi\d\f)^\sharp=0,$$
thus showing that 
\begin{equation}\label{nabla0}\nabla^g_\xi\d\f=0\qquad\hbox{and}\qquad\d\f(\xi)=0.\end{equation}

We denote by $\Scal^{\tilde g}$ and $\Scal^g$ the scalar curvatures of $(M,\tilde g)$ and $(M,g)$ respectively. Since $\mathrm{Ric}_0^g=\Ric^g-\frac1n\Scal^g\, g$, and $\mathrm{Ric}_0^{\tilde g}=0$, we obtain from \eqref{ricci}:
\begin{equation}\label{ricci1}
0=\Ric^g-\frac1n\Scal^g\, g-(n-2)(\nabla^g\d \f-\d \f\otimes\d \f)-\frac{n-2}n(\Delta^g \f+|\d \f|_g^2)\, g.
\end{equation}
Plugging $\xi$ into this formula and using \eqref{nabla0} together with the fact that $\Ric(\xi)=0$, yields
\begin{equation}\label{ricci2}
\Scal^g=-(n-2)(\Delta^g \f+|\d \f|_g^2).
\end{equation}
On the other hand, the scalar curvatures of $g$ and $\tilde g$ are related by (cf. \cite[p. 59 f)]{besse}):
\begin{equation}\label{ricci3}
e^{2\f}\Scal^{\tilde g}=\Scal^g+2(n-1)\Delta^g \f-(n-2)(n-1)|\d \f|_g^2,
\end{equation}
whence taking \eqref{ricci2} into account:
\begin{equation}\label{ricci4}
e^{2\f}\Scal^{\tilde g}=n(\Delta^g \f-(n-2)|\d \f|_g^2).
\end{equation}
This formula shows that the constant $\Scal^{\tilde g}$ is non-negative at a point where $\f$ attains its maximum, and non-positive at a point where $\f$ attains its minimum. Thus $\Scal^{\tilde g}=0$, so 
\begin{equation}\label{delta} \Delta^g \f=(n-2)|\d \f|_g^2.\end{equation} 
Integrating this equation over $M$ with respect to the volume form $\d\mu^g$ we obtain 
$$0=\int_M\Delta^g \f\,\d\mu^g=(n-2)\int_M|\d \f|_g^2\,\d\mu^g,$$
thus proving that $\f$ is constant on $M$ if $n\ge 3$. The same conclusion holds for $n=2$, since in this case \eqref{delta} shows directly that $\f$ is harmonic, thus constant.
\end{proof}

\section{Conformal Killing forms} \label{sck} 

In this section we review some classification results about Killing forms on manifolds with reduced holonomy, which will be crucial for the proof of Theorems \ref{main} and \ref{reducible}. 

\begin{defi} \label{d41} Let $(M,g)$ be a Riemannian manifold with Levi-Civita connection $\nabla^g$. 
A {\em conformal Killing (or twistor) form} on $(M,g)$ is a $p$-form $\psi\in\Gamma(\Lambda^{p}M)$ which satisfies 
\begin{equation}\label{killing}
\nabla^g_X\psi=\tfrac{1}{  p+1}X\lrcorner \d\psi -\tfrac{1}{  n-p+1}X^\flat\wedge\delta^g \psi,
\end{equation}
for all vector fields $X$, where $X^\flat:=g(X,\cdot)$ denotes the metric dual of $X$ and $\delta^g$ denotes the co-differential defined by the metric $g$. 
If $\psi$ is in addition co-closed, it is called a {\em Killing $p$-form}. 
This is equivalent to $\nabla^g\psi\in\Gamma(\Lambda^{p+1}M)$ or to 
$X \lrcorner \nabla^g_X \psi = 0$ for any vector field $X$.
\end{defi}

Conformal Killing forms have the following well known conformal invariance property:

\begin{lemma}[cf. e.g. \cite{bc}]\label{conf-inv} Let $g$ and $\tilde g:=e^{2\f}g$ be two conformally related metrics on a manifold $M$. Then $\psi$ is a conformal Killing $p$-form on $(M,g)$ if and only if $\tilde\psi:=e^{(p+1)\f}\psi$ is a conformal Killing $p$-form on $(M,\tilde g)$.
\end{lemma}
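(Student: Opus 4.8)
The plan is to prove this by a direct computation, using the classical conformal transformation rules for the three objects that enter the twistor equation~\eqref{killing}: the exterior derivative~$\d$, the Levi-Civita connection, and the codifferential. Explicitly, I would use that $\d$ is independent of the metric; that the two Levi-Civita connections are related by $\tilde\nabla_XY=\nabla^g_XY+\d\f(X)Y+\d\f(Y)X-g(X,Y)\grad^g\f$; and that on $p$-forms $\alpha$ one has $\tilde\delta\alpha=e^{-2\f}\bigl(\delta^g\alpha-(n-2p)\,\grad^g\f\lrcorner\alpha\bigr)$, where $\grad^g\f:=(\d\f)^\sharp$ denotes the $g$-gradient of $\f$.

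The first step is to turn the connection rule into a Leibniz formula on $\Lambda^pM$: applying it to $1$-forms and using that $\tilde\nabla_X$ is a derivation of the wedge product gives, for every $p$-form $\psi$,
\[
\tilde\nabla_X\psi=\nabla^g_X\psi-p\,\d\f(X)\,\psi-\d\f\wedge(X\lrcorner\psi)+X^\flat\wedge(\grad^g\f\lrcorner\psi).
\]
Next, with $\tilde\psi:=e^{(p+1)\f}\psi$, I would compute separately the three terms occurring in \eqref{killing} for the metric $\tilde g$ and the form $\tilde\psi$: the term $\tilde\nabla_X\tilde\psi$, using the Leibniz rule just obtained together with $\tilde\nabla_X(e^{(p+1)\f}\psi)=(p+1)\d\f(X)e^{(p+1)\f}\psi+e^{(p+1)\f}\tilde\nabla_X\psi$; the term $\tfrac{1}{p+1}X\lrcorner\d\tilde\psi$, using $\d\tilde\psi=e^{(p+1)\f}((p+1)\d\f\wedge\psi+\d\psi)$ and the antiderivation property of the interior product; and the term $-\tfrac{1}{n-p+1}\,\tilde g(X,\cdot)\wedge\tilde\delta\tilde\psi$, using $\tilde g(X,\cdot)=e^{2\f}X^\flat$, the codifferential rule above, and the Leibniz rule $\delta^g(h\psi)=h\,\delta^g\psi-\grad^g h\lrcorner\psi$. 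Each of these comes out as $e^{(p+1)\f}$ times an expression built from $\nabla^g_X\psi$, $X\lrcorner\d\psi$, $X^\flat\wedge\delta^g\psi$ and a collection of correction terms linear in $\d\f$.

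The final step is to add the three contributions and observe the cancellation: the precise role of the conformal weight $(p+1)\f$ in the definition of $\tilde\psi$ is to make all the $\d\f$-terms disappear, leaving
\[
\tilde\nabla_X\tilde\psi-\tfrac{1}{p+1}X\lrcorner\d\tilde\psi+\tfrac{1}{n-p+1}\,\tilde g(X,\cdot)\wedge\tilde\delta\tilde\psi=e^{(p+1)\f}\Bigl(\nabla^g_X\psi-\tfrac{1}{p+1}X\lrcorner\d\psi+\tfrac{1}{n-p+1}X^\flat\wedge\delta^g\psi\Bigr).
\]
The right-hand side vanishes for all vector fields $X$ precisely when $\psi$ satisfies \eqref{killing} on $(M,g)$, which yields the claimed equivalence.

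I do not expect any conceptual obstacle: this is the standard verification that the twistor operator is conformally covariant, with a conformal bidegree depending only on $n$ and $p$. The only point requiring genuine care is the bookkeeping — getting all the signs and numerical coefficients right in the connection and codifferential transformation rules and in the induced action on $p$-forms — but once the weight is fixed the cancellation of the $\d\f$-contributions is forced. (One could also simply invoke the conformal covariance of the twistor operator from the literature, but the computation sketched above is short and self-contained.)
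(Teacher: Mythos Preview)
Your proposal is correct and follows essentially the same route as the paper: the paper derives the same formula for $\nabla^{\tilde g}_X\psi$ on $p$-forms from the connection change, computes $\d\tilde\psi$ and $\delta^{\tilde g}\tilde\psi$, and then assembles the three pieces to obtain exactly the identity you wrote at the end. The only cosmetic difference is that the paper computes $\delta^{\tilde g}\tilde\psi$ directly via a conformally rescaled orthonormal frame rather than first stating the general rule $\tilde\delta\alpha=e^{-2\f}(\delta^g\alpha-(n-2p)\grad^g\f\lrcorner\alpha)$ and then plugging in $\alpha=\tilde\psi$; the resulting formula is the same.
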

\begin{proof} We first compute
\begin{equation}\label{dpsi}\d\tilde\psi =e^{(p+1)\f}\left( \d\psi +(p+1) \d \f\wedge \psi\right).
\end{equation}
The Levi-Civita connections $\nabla^g$ and $\nabla^{\tilde g}$ of $g$ and $\tilde g$ are related by
$$\nabla^{\tilde g}_X Y = \nabla^g_XY  + \d \f (X) Y + \d \f(Y) X - g(X, Y)\, \grad_g\f
$$
where  $\grad_g\f$ is the gradient of $\f$ with respect to  $g$ (cf. \cite[Th. 1.159]{besse}). This immediately shows that for every $p$-form $\psi$ and tangent vector $X$, the following relation holds:
$$\nabla^{\tilde g}_X\psi = \nabla^g_X\psi  -p\, \d \f (X) \psi + X^\flat\wedge (\grad_g\f\lrcorner \psi)- \d \f\wedge (X\lrcorner\psi),
$$
whence
\begin{equation}\label{nablapsi}\nabla^{\tilde g}_X\tilde\psi =e^{(p+1)\f}\left( \nabla^g_X\psi + \d \f (X) \psi + X^\flat\wedge (\grad_g\f\lrcorner \psi)- \d \f\wedge (X\lrcorner\psi)\right),
\end{equation}

If $\{e_i\}$ is a local orthonormal frame with respect to $g$, then $\{\tilde e_i:=e^{-\f}e_i\}$ is a local orthonormal frame with respect to $\tilde g$, and thus the co-differentials of $g$ and $\tilde g$ are related by
\begin{equation}\label{deltapsi}\delta^{\tilde g}\tilde\psi=-\sum_{i=1}^n\tilde e_i\lrcorner\nabla^{\tilde g}_{e_i}\tilde\psi=e^{(p-1)\f}\left( \delta^g\psi -(n-p+1) \grad_g\f\lrcorner \psi\right).\end{equation}
Using \eqref{dpsi}, \eqref{nablapsi} and \eqref{deltapsi}, together with the fact that the metric dual of $X$ with respect to $\tilde g$ is $X^{\flat_{\tilde g}}=e^{2\f}X^\flat$, we obtain
$$\nabla^{\tilde g}_X\tilde\psi-\tfrac{1}{  p+1}X\lrcorner \d\tilde\psi +\tfrac{1}{  n-p+1} X^{\flat_{\tilde g}}\wedge\delta^{\tilde g}\tilde\psi\ =e^{(p+1)\f}\left( \nabla^g_X\psi-\tfrac{1}{  p+1}X\lrcorner \d\psi +\tfrac{1}{  n-p+1}X^\flat\wedge\delta^g\psi\right),$$
thus proving the claim.
\end{proof}

Assume that $M$ is oriented, and denote by $*$ the Hodge operator.
From the general identities
\begin{equation} \label{identity} * (X ^{\flat} \wedge \psi) = (- 1) ^p \, X \lrcorner  * \psi,\qquad *(*\psi)=(-1)^{p(n-p)}\psi,\qquad \delta^g\psi=(-1)^{n(p-1)-1}*\d *\psi \end{equation}
which hold for any vector field $X$ and any $p$-form $\psi$ on $M$, we deduce that the Hodge operator maps conformal Killing $p$--forms into conformal Killing 
$(n-p)$--forms. In particular, if $\psi$ is a closed conformal Killing form, $*\psi$ is a Killing form.

Killing forms on compact manifolds with reduced holonomy have been  recently studied in a series of papers \cite{bms}, \cite{ms1}, \cite{ms2}, \cite{ms}, \cite{uwe} and \cite{uwe1}. The following proposition summarizes some of the results of these papers.

\begin{prop}\label{prop}
A Killing form $\psi$ of degree $p\ge 2$ on a compact Riemannian manifold  $(M,g)$ is automatically $\nabla^g$-parallel provided that one of the following conditions holds:
\begin{enumerate}
\item[$(i)$] $(M,g)$ is K\"ahler;
\item[$(ii)$] $(M,g)$ is quaternion-K\"ahler;
\item[$(iii)$] $(M,g)$ has holonomy contained in $\G_2$ for $n=7$ or $\Spin(7)$ for $n=8$;
\item[$(iv)$] $(M,g)$ is locally symmetric, irreducible and has non-constant sectional curvature.
\end{enumerate}
\end{prop}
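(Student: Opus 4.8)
The proposition assembles, into a single statement, a number of classification theorems proved separately in the references; accordingly the plan is to argue case by case, invoking in each of $(i)$--$(iv)$ the appropriate description of (conformal) Killing forms on compact manifolds with that holonomy and then reading off the conclusion for co-closed forms of degree $p\ge 2$.

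For $(i)$ I would use the classification of conformal Killing forms on compact K\"ahler manifolds from \cite{ms1}: in the relevant range of degrees such a form is, modulo parallel forms, built out of Hamiltonian $2$-forms and powers of the K\"ahler form, and the non-parallel pieces are not co-closed, so a \emph{Killing} $p$-form with $p\ge 2$ is forced to be parallel; when $(M,g)$ is merely locally K\"ahler and reducible one first invokes the product structure of Killing forms on compact Riemannian products \cite{ms} to reduce to the irreducible (or flat) factors. For $(ii)$ and $(iii)$ the corresponding classifications --- on compact quaternion-K\"ahler manifolds in \cite{ms2}, and on compact manifolds with holonomy contained in $\G_2$ ($n=7$) or $\Spin(7)$ ($n=8$) in \cite{uwe1} --- directly give that every Killing $p$-form of degree $p\ge 2$ is parallel. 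For $(iv)$ I would quote \cite{bms}, where Killing forms on compact irreducible symmetric spaces are determined: the only such spaces carrying non-parallel Killing $p$-forms with $p\ge 2$ are the round spheres (and their quotients), which are precisely the irreducible symmetric spaces of constant sectional curvature and are therefore excluded by hypothesis. The restriction $p\ge 2$ is indispensable throughout, since a Killing $1$-form is metric-dual to a Killing vector field, which is typically non-parallel (for instance on $\mathbb{CP}^m$).

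The only genuine work is bookkeeping: some of the quoted theorems are stated for conformal Killing forms, or over a restricted range of degrees, so one must check that passing to co-closed forms --- and disposing of the extreme degrees $p\in\{n-1,n\}$ through the Hodge-$*$ identities \eqref{identity}, which convert a Killing $(n-1)$- or $n$-form into a closed conformal Killing $1$- or $0$-form --- reproduces exactly the asserted parallelism in each case. Were one to aim at a self-contained proof rather than a compilation, the hard part would be the representation-theoretic core of \cite{bms} and \cite{ms2}: there one has to run through Berger's list and the classification of irreducible symmetric spaces and apply, holonomy group by holonomy group, a Weitzenb\"ock/Lichnerowicz-type curvature identity on $\Lambda^{p+1}$ in order to force $\nabla^g\psi=0$.
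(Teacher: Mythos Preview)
Your overall plan---cite the appropriate classification result for each case---matches the paper's, and cases $(ii)$ and $(iii)$ are handled exactly as the paper does (via \cite{ms2} and \cite{uwe1}). Two points deserve comment.

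For $(i)$ you propose going through the full classification of conformal Killing forms on compact K\"ahler manifolds in \cite{ms1}, extracting from the Hamiltonian $2$-form description that the non-parallel pieces fail to be co-closed. This works in principle but is considerably more labor than needed: the paper instead cites \cite[Lemma 4.2]{bms}, which states directly that Killing $p$-forms ($p\ge 2$) on compact K\"ahler manifolds are parallel, with a short self-contained argument. Your detour through \cite{ms1} is correct but circuitous; the remark about reducing ``locally K\"ahler reducible'' cases via \cite{ms} is also unnecessary here since the hypothesis is simply that $(M,g)$ is K\"ahler.

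For $(iv)$ there is a genuine gap. You quote \cite{bms} as classifying Killing forms on ``compact irreducible symmetric spaces,'' but \cite[Thm.~1.1]{bms} is stated for \emph{simply connected symmetric spaces of compact type}, whereas the hypothesis here is only that $(M,g)$ is \emph{locally symmetric} and irreducible. A compact locally symmetric manifold need not be globally symmetric, and its universal cover need not be of compact type, so you cannot simply invoke the theorem as stated. The paper closes this gap by observing that the relevant step is actually \cite[Lemma 4.3]{bms}, which is local in nature: it shows that the existence of a non-parallel Killing $p$-form ($p\ge 2$) on an irreducible locally symmetric space forces the Weyl tensor to vanish identically; since such a space is Einstein, vanishing Weyl tensor implies constant sectional curvature, contradicting the hypothesis. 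You should make this reduction explicit rather than appealing to the global classification.
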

\begin{proof}
The first three assertions follow from  \cite[Lemma 4.2]{bms},  \cite[Thm. 6.1]{ms2} and  \cite[Thm. 1.1 and Thm. 1.2]{uwe1} respectively. The proof of $(iv)$ is implicitly contained in \cite[Thm. 1.1]{bms}, where one further assumes that $(M,g)$ is a simply connected symmetric space of compact type. This assumption is in fact superfluous in the irreducible case, since \cite[Lemma 4.3]{bms} actually shows that if an irreducible locally symmetric space $(M,g)$ carries a non-parallel Killing $p$-form with $p\ge 2$, then its Weyl tensor vanishes identically, thus $(M,g)$ has constant sectional curvature (being Einstein).
\end{proof}

We finally consider one further situation where conformal Killing forms can be classified. Assume that $(M,g)$ is a Riemannian manifold whose tangent bundle decomposes in an orthogonal direct sum $\T M=T_1\oplus T_2$ of $\nabla^g$-parallel distributions. A $p$-form $\psi$ on $M$ is called {\em basic} with respect to $T_1$ if $X\lrcorner\psi=0$ and $\nabla^g_X\psi$=0 for every $X\in T_2$. Of course, by the local de Rham decomposition theorem, every point has a neighbourhood isometric to a Riemannian product, and basic forms are just pull-backs of forms on the factors. We then have

\begin{prop}\label{ckp} If $(M,g)$ is a compact oriented Riemannian manifold whose tangent bundle has an orthogonal parallel splitting $\T M=T_1\oplus T_2$, then every conformal Killing form on $(M,g)$ is a sum of parallel forms, basic Killing forms with respect to $T_1$ or $T_2$, and Hodge duals of them.
\end{prop}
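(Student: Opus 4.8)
The plan is to reduce Proposition \ref{ckp} to the classification of conformal Killing forms on compact Riemannian products, i.e.\ to \cite[Thm.\ 2.1]{ms}, which asserts precisely that on a compact Riemannian product every conformal Killing form is a sum of parallel forms, basic Killing forms on the factors, and Hodge duals of such. The only gap between that statement and what we want is that here $(M,g)$ is merely \emph{locally} a product: the splitting $\T M=T_1\oplus T_2$ is given globally by parallel distributions, but $M$ itself need not be a global metric product. So the first step is to pass to the universal cover $(\hat M,\hat g)$, where the de Rham decomposition theorem gives a genuine global splitting $\hat M=\hat M_1\times\hat M_2$, and to lift the conformal Killing form $\psi$ to a conformal Killing form $\hat\psi$ on $\hat M$. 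The difficulty is that $\hat M$ need not be compact, so one cannot apply \cite[Thm.\ 2.1]{ms} verbatim; this is why the introduction mentions ``a slight extension of \cite[Thm.\ 2.1]{ms}''.

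Next I would run the argument of \cite{ms} equivariantly with respect to the deck group $\pi_1(M)$. Concretely: the proof in \cite{ms} decomposes a conformal Killing form on a product according to bidegree and shows, component by component, that each piece is either parallel, basic Killing on one factor, or a Hodge dual; all of this is done by purely tensorial/elliptic arguments on the factors. These arguments are local in nature except at the two points where compactness is used — to integrate a divergence term and to invoke that a subharmonic (or Killing-type) function is constant. Since $\pi_1(M)$ acts on $\hat M$ by isometries with compact quotient $M$, any $\pi_1(M)$-invariant function on $\hat M$ that is subharmonic descends to a subharmonic function on the compact manifold $M$ and is therefore constant; likewise integrations by parts can be carried out on $M$ after checking the relevant integrands are $\pi_1(M)$-invariant. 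Thus one obtains the decomposition of $\hat\psi$ into a sum of $\hat M$-parallel forms and basic Killing forms on $\hat M_1$, $\hat M_2$ and their Hodge duals, with each summand canonically determined, hence automatically $\pi_1(M)$-invariant (since $\psi$ is). Invariant parallel forms, invariant basic Killing forms, and invariant Hodge duals of such then descend to the corresponding objects on $M$ relative to $T_1$ and $T_2$.

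One technical point deserves care. A basic Killing form on a factor $\hat M_i$ that is $\pi_1(M)$-invariant descends to a basic Killing form on $M$ with respect to $T_i$ only once we know that the leaves of $T_i$, though possibly not forming a global fibration, still carry a well-defined notion of ``Killing form on the leaf space'' — but this is exactly the content of Definition of ``basic'' given just before the Proposition ($X\lrcorner\psi=0$ and $\nabla^g_X\psi=0$ for $X\in T_j$, $j\neq i$), which is a pointwise condition stable under isometries, so invariance plus descent is immediate. The same remark applies to the Hodge duals once one fixes orientations on $T_1$ and $T_2$ (available after a further double cover, but the statement already assumes $M$ oriented and the splitting is parallel, so the factor orientations are globally defined up to the choice already made).

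I expect the main obstacle to be verifying carefully that the compactness hypothesis in \cite[Thm.\ 2.1]{ms} is used \emph{only} through integration-by-parts identities and maximum-principle/constancy statements that survive the equivariant lift — i.e.\ that no step of the original proof secretly needs $\hat M$ itself to be compact (for instance, no appeal to Hodge theory or Bochner vanishing on $\hat M$ that would fail on a non-compact cover). Once that audit is complete, the descent is formal.
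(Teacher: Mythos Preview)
Your approach is workable but takes a genuinely different, and more circuitous, route than the paper. You lift to the universal cover $\hat M=\hat M_1\times\hat M_2$, invoke the de Rham decomposition to get a genuine global product, and then propose to run the proof of \cite[Thm.~2.1]{ms} $\pi_1(M)$-equivariantly, descending at the end. The paper instead never leaves the compact manifold $M$: it observes that the partial exterior derivatives $\d_1,\d_2$ and co-differentials $\delta_1,\delta_2$ from \cite{ms} can be defined directly on $M$ using local orthonormal bases of the global parallel distributions $T_1,T_2$, and that Stokes' formula on the compact $M$ still gives $\delta_i=\d_i^*$. Once these operators and their adjointness are in place, the entire computational argument of \cite[Thm.~2.1]{ms} goes through verbatim on $M$ itself, since that proof uses the product structure only through these partial operators and never through a global diffeomorphism $M\simeq M_1\times M_2$.

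What your approach costs is precisely the audit you flag as the ``main obstacle'': you must check that every use of compactness in \cite{ms} (integration by parts, constancy of subharmonic functions, possibly Hodge projections) transfers to the $\pi_1(M)$-equivariant setting on the non-compact $\hat M$, and that each summand of the decomposition of $\hat\psi$ is canonically determined so as to inherit $\pi_1(M)$-invariance. None of this is wrong, but it is work that the paper's direct approach simply avoids. What your approach buys, arguably, is conceptual clarity about why the global product hypothesis is inessential; the paper's one-line verification that $\delta_i=\d_i^*$ on $M$ accomplishes the same thing more economically.
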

\begin{proof} If $(M,g)$ is a Riemannian product, this is exactly the statement of \cite[Thm. 2.1]{ms}. Although the situation needed here is slightly more general, the same proof continues to hold. One defines the partial exterior derivatives $\d_1$, $\d_2$ and co-differentials $\delta_1$, $\delta_2$ by the same formulas as in the Riemannian product case using local orthonormal bases of $T_i$, and one easily checks using Stokes' formula that $\delta_i$ is still the formal adjoint of $\d_i$ for $i=1,2$. The rest of the proof from \cite[Thm. 2.1]{ms} is unchanged.
\end{proof}

\section{Conformally related metrics with reduced holonomy}\label{s5}

We are now in position to state our first main result:

\begin{thm}\label{main} Let $M^n$ be a compact connected manifold carrying two conformally related non-homothetic Riemannian metrics $g$ and $\tilde g$ such that $(M,g)$ and $(M,\tilde g)$ have reduced holonomy. Then either $g$ and $\tilde g$ have both reducible holonomy, or up to a finite covering $(M,g,\tilde g)$ is an 
ambik\"ahler structure for $n=4$ or is obtained from the Calabi Ansatz for $n\ge 6$.
\end{thm}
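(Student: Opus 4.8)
The plan is to run the Berger--Simons trichotomy together with the classification results recalled above. Since for $n\le 3$ every metric of reduced holonomy has reducible holonomy, I may assume $n\ge 4$. As $g$ and $\tilde g$ have reduced holonomy, each of them is of Type K, of Type E or of Type P. If both are of Type P the first alternative of the theorem holds, so from now on I assume at least one of the two metrics is of Type K or of Type E and aim for the second alternative. If one of $g$, $\tilde g$ is of Type K, then one of the two conformally related metrics is K\"ahler while the other has reduced holonomy, so \cite[Thm.~1.3]{mmp} applies and, after passing to a finite covering, $(M,g,\tilde g)$ is an ambik\"ahler structure for $n=4$ and is obtained from the Calabi Ansatz for $n\ge 6$. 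It therefore remains to prove that the case where neither metric is of Type K cannot occur unless both are of Type P.

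Assume then that neither metric is of Type K and that they are not both of Type P; I want a contradiction. If both $g$ and $\tilde g$ were of Type E they would both be Einstein, and by Corollary \ref{c1} the manifold $(M,g)$ would be homothetic to the round sphere $\mathbb{S}^n$ (the two metrics being non-homothetic); but $\mathbb{S}^n$ has full holonomy $\SO(n)$, contradicting that $g$ has reduced holonomy. Hence, after possibly exchanging $g$ and $\tilde g$ and replacing $\f$ by $-\f$, I may assume that $g$ is of Type P and that $\tilde g=e^{2\f}g$ is of Type E, so that $(M,\tilde g)$ is irreducible, Einstein, and satisfies one of the hypotheses of Proposition \ref{prop} (being K\"ahler when $\Hol_0(M,\tilde g)$ is $\SU$ or $\Sp$, quaternion-K\"ahler when it is $\Sp\cdot\Sp(1)$, of holonomy $\G_2$ or $\Spin(7)$, or irreducible locally symmetric with non-constant sectional curvature). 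I claim this situation is impossible.

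To see this I would use the reducibility of $g$. By Lemma \ref{hol}$(iii)$, after replacing $M$ by a finite covering (which preserves all the hypotheses and is permitted by the statement), $\T M$ carries an orthogonal $\nabla^g$-parallel splitting $\T M=T_1\oplus T_2$ with both summands non-trivial; replacing $M$ by a further finite covering I may assume $M$, $T_1$, $T_2$ are oriented, and I denote by $\omega_i$ the ($\nabla^g$-parallel, pointwise decomposable, nowhere-zero) volume form of $T_i$. The decisive external input is the theorem of Cleyton \cite{cleyton}, also obtained by K\"uhnel and Rademacher \cite{kr}, according to which a local Riemannian product which is conformal, via $e^{2\f}$, to an Einstein metric has its conformal factor $\f$ depending locally on a single factor only. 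Applied to $g=g|_{T_1}\oplus g|_{T_2}$ and to the Einstein metric $\tilde g$, it shows that $\f$ is constant in the directions of one of the two distributions, say $T_2$; equivalently $\grad_g\f\in T_1=T_2^\perp$. Let $q:=\rk T_2\in\{1,\dots,n-1\}$. By Lemma \ref{conf-inv} the form $\tilde\omega_2:=e^{(q+1)\f}\omega_2$ is a conformal Killing $q$-form on $(M,\tilde g)$; moreover $\delta^g\omega_2=0$ and $\grad_g\f\lrcorner\omega_2=0$ (since $\grad_g\f$ lies in $T_1$ while $\omega_2$ is a form in the directions of $T_2$), so formula \eqref{deltapsi} gives $\delta^{\tilde g}\tilde\omega_2=0$. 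Thus $\tilde\omega_2$ is a Killing $q$-form on the compact manifold $(M,\tilde g)$.

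It remains to rule this out. If $q=1$, then $\omega_2^{\sharp_g}$ is a non-trivial $\nabla^g$-parallel vector field on $(M,g)$, and applying Lemma \ref{1e} to $(M,g)$ with the Einstein metric $\tilde g=e^{2\f}g$ forces $\f$ to be constant, contradicting non-homotheticity. If $q\ge 2$, then $\tilde\omega_2$ is a Killing form of degree $\ge 2$ on $(M,\tilde g)$, which satisfies one of the hypotheses of Proposition \ref{prop}, hence $\tilde\omega_2$ is $\nabla^{\tilde g}$-parallel; being nowhere zero and pointwise decomposable, it then defines a $\nabla^{\tilde g}$-parallel distribution of rank $q$ with $1\le q\le n-1$, contradicting the irreducibility of $(M,\tilde g)$. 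In both cases we obtain a contradiction, which completes the dichotomy. I expect the principal difficulty to be the precise formulation and use of the Cleyton--K\"uhnel--Rademacher theorem --- making sure it applies to the merely local product structure of $g$ and pins the conformal factor to one $\nabla^g$-parallel de Rham factor --- together with the routine but somewhat delicate bookkeeping of the successive finite coverings; checking that $\tilde\omega_2$ is co-closed and that a parallel decomposable form yields a parallel distribution is then straightforward.
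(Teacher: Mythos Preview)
Your overall architecture---Berger--Simons trichotomy, Corollary~\ref{c1} for the E/E case, Cleyton--K\"uhnel--Rademacher plus Proposition~\ref{prop} for the P/E case---is the paper's. There is, however, one genuine omission. In the Type~K case, \cite[Thm.~1.3]{mmp} has a \emph{third} alternative beyond ambik\"ahler and Calabi Ansatz, namely the construction of \cite[Thm.~4.6]{mmp}, where the universal cover of $(M,g)$ is $(\RM^2\times N,\,\d s^2+\d t^2+e^{2c(t)}g_N)$ with Lee form $-\d\f=\d c$. The paper eliminates this explicitly by observing that then $\tilde g=e^{-2c(t)}(\d s^2+\d t^2)+g_N$ is itself a Riemannian product, so both metrics are reducible, contradicting the irreducibility built into Type~K. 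You need this step; you should also invoke Lemma~\ref{hol}(i) first, since Type~K only gives $\Hol_0=\U(m)$, not a global K\"ahler structure.

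In the P/E endgame you diverge slightly from the paper. You show $\tilde\omega_2=e^{(q+1)\f}\omega_2$ is co-closed directly (via $\grad_g\f\lrcorner\omega_2=0$), hence Killing, and after proving it $\nabla^{\tilde g}$-parallel you conclude from decomposability that $(M,\tilde g)$ acquires a non-trivial parallel distribution, contradicting irreducibility. The paper instead shows $\tilde\omega_1=e^{(p+1)\f}\omega_1$ is \emph{closed} (via $\d\f\wedge\omega_1=0$), Hodge-dualises to a Killing form, proves parallelism, and then computes $|\tilde\omega_1|_{\tilde g}=e^{\f}$ to force $\f$ constant. Both endings work; your handling of $q=1$ via Lemma~\ref{1e} replaces the paper's separate Lemma~\ref{d1} that both ranks exceed~$1$. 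One caution: $\Hol_0(M,\tilde g)\in\{\SU(m),\Sp(k)\}$ does not by itself make $(M,\tilde g)$ K\"ahler as Proposition~\ref{prop}(i) requires; the paper invokes Cheeger--Gromoll (the metric being compact irreducible Ricci-flat) to pass to a K\"ahler finite cover, and uses Lemma~\ref{hol}(ii) in the $\Sp(k)\Sp(1)$ case. These covering steps, and the exclusion of the exceptional Euclidean case in Cleyton--K\"uhnel--Rademacher via boundedness of $\f$ on the universal cover, should be made explicit.
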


\begin{proof} By assumption we have $\tilde g=e^{2\f}g$ for some non-constant function $\f$ on $M$. 

Assume first that $(M,\tilde g)$ is of type K. Lemma \ref{hol} (i) shows that after replacing $M$ with a double covering if necessary, there exists a complex structure $J$ on $M$ such that $(M,\tilde g,J)$ is K\"ahler. Then $(M,g, J)$ is a globally conformally K\"ahler manifold with non-generic holonomy. Using the classification of the possible holonomy groups of compact locally conformally K\"ahler manifolds \cite[Theorem 1.3]{mmp} we see that up to a finite covering, either $n=4$ and $(M,g,\tilde g)$ is ambik\"ahler in the sense of  \cite{acg}, or $n\ge 6$ and $(M,g,\tilde g)$ is obtained from the Calabi Ansatz, or $(M,g)$ is obtained from the construction described in \cite[Theorem 4.6]{mmp}. In the latter case, the universal covering of $(M,g)$ is isometric to $(\mathbb{R}^2\times N,\d s^2+\d t^2+ e^{2c(t)}g_N)$ for some K\"ahler manifold $(N,g_N)$ and some real function $c$, whose differential is equal to the Lee form $-\d \f$ of $(M,g,J)$. Up to a constant factor, we thus have 
$$\tilde g=e^{2\f}g=e^{-2c(t)}g=e^{-2c(t)}(\d s^2+\d t^2)+ g_N,$$
so both $g$ and $\tilde g$ have reducible holonomy. This contradicts the fact that $(M,\tilde g)$ is of type K, and thus has irreducible holonomy. By symmetry, the same argument applies if $(M,g)$ is of type K. 

If $g$ and $\tilde g$ are of type P, there is nothing to prove. 

If $g$ and $\tilde g$ are of type E, then they are both Einstein and not locally isometric to the round sphere (since $\Hol(\mathbb{S}^n)=\SO(n)$). By Corollary \ref{c1}, $g$ and $\tilde g$ are homothetic, which contradicts our assumption.

By symmetry, it remains to study one last case: $g$ is of type P and $\tilde g$ is of type E. Lemma \ref{hol} (iii) shows that, after replacing $M$ with a finite covering if necessary, we may assume that $(M,g)$ is oriented, and the tangent bundle of $M$ has non-trivial decomposition $\T M=T_1\oplus T_2$
where $T_1$ and $T_2$ are oriented, mutually orthogonal, and $\nabla^g$-parallel distributions.

\begin{lemma}\label{d1} The ranks of $T_1$ and $T_2$ are larger than $1$. 
\end{lemma}
\begin{proof} Assume for instance that $\rk(T_1)=1$. Then $T_1$ is a trivial line bundle (being oriented), and thus has a section $\xi$ of unit length. Since $T_1$ is preserved by $\nabla^g$, we have $\nabla^g\xi=0$, so Lemma \ref{1e} shows that the conformal factor $e^{2\f}$ must be constant, which contradicts the assumption that the metrics $g$ and $\tilde g$ are non-homothetic.
\end{proof}

\begin{lemma}\label{d2} The conformal factor $\f$ is constant along $T_1$ or $T_2$. 
\end{lemma}
\begin{proof} Let $(\hat M,\hat g)$ be the universal cover of $(M,g)$. The de Rham decomposition theorem shows that the universal cover $(\hat M,\hat g)$ of $(M,g)$ is isometric to a product of complete Riemannian manifolds $(\hat M_1,\hat g_1)\times(\hat M_2,\hat g_2)$ such that the lift of $T_i$ to $\hat M$ is equal to the pull-back of $\T\hat M_i$ to $\hat M$. By assumption, the pull-back $\hat \f$ of $\f$ to $\hat M$ is bounded, and $e^{2\hat \f}\hat g$ is Einstein. Using \cite[Thm. 2]{cleyton} (cf. also \cite[Corollary 3.6]{kr}) we deduce that either $\hat \f$ only depends on one factor, or $(\hat M_i,\hat g_i)$ are isometric to Euclidean spaces and $e^{-\hat\f(x,y)}=\|x\|^2+\|y\|^2+c$ for some positive constant $c$. This last case, however, is impossible since $\hat\f$ is bounded.

\end{proof}

We may thus assume from now on that $X(\f)=0$ for every vector $X$ tangent to $T_2$. This is equivalent to 
\begin{equation}\label{zero}\d \f\wedge\omega_1=0,
\end{equation}
where $\omega_1$ denotes the volume form of the distribution $T_1$. Since $\omega_1$ is $\nabla^{g}$-parallel, Lemma \ref{conf-inv} shows that $\tilde \omega_1:=e^{(p+1)\f}\omega_1$ is a conformal Killing $p$-form on $(M,\tilde g)$, where $p$ denotes the rank of $T_1$. Moreover, by \eqref{zero}
$$\d \tilde \omega_1=(p+1)e^{(p+1)\f}\d \f\wedge\omega_1=0.$$
Consequently, if $\ast_{\tilde g}$ denotes the Hodge operator on $(M,\tilde g)$, we deduce that $\tilde\omega_2:=\ast_{\tilde g}\tilde\omega_1$ is a Killing form on $(M,\tilde g)$. Moreover, by Lemma \ref{d1}, we have $\deg(\tilde\omega_2)\ge 2$.

We claim that $\tilde\omega_2$ is $\nabla^{\tilde g}$-parallel. Since $(M,\tilde g)$ is of type E, we distinguish the following cases:
\begin{itemize}
\item If $\Hol_0(M,\tilde g)$ is conjugated to $\SU(n/2)$ for $n$ even, or to $\Sp(n/4)$ for $n$ multiple of $4$, then $(M,\tilde g)$ is irreducible Ricci-flat, so the Cheeger-Gromoll theorem \cite[Cor. 6.67]{besse} shows that a finite covering of $(M,\tilde g)$ is K\"ahler, and the claim follows from Proposition \ref{prop} (i). 
\item If $\Hol_0(M,\tilde g)$ is conjugated to $\G_2$ for $n=7$ or to $\Spin(7)$ for $n=8$, then $(M,\tilde g)$ is irreducible Ricci-flat, so the Cheeger-Gromoll theorem again shows that a finite covering of $(M,\tilde g)$ has full holonomy $\G_2$ or $\Spin(7)$ and the claim follows from Proposition \ref{prop} (iii).
\item If $\Hol_0(M,\tilde g)$ is conjugated to $\Sp(n/4)\Sp(1)$ for $n\ge 8$ and multiple of $4$, then Lemma \ref{hol} (ii) shows that either $(M,\tilde g)$, or a double covering of it, is quaternion-K\"ahler, so the claim follows from from Proposition \ref{prop} (ii). 
\item If $(M,\tilde g)$ is locally symmetric, irreducible, with non-constant sectional curvature, then the claim follows from from Proposition \ref{prop} (iv).
\end{itemize}
We thus have shown that $\tilde\omega_2$ is $\nabla^{\tilde g}$-parallel. By Hodge duality, $\tilde\omega_1$ is also $\nabla^{\tilde g}$-parallel, in particular $|\tilde\omega_1|_{\tilde g}=:c$ is constant. On the other hand, $\omega_1$ has norm 1 with respect to $g$, whence
$$c=|\tilde\omega_1|_{\tilde g}=e^{-p\f}|\tilde\omega_1|_{g}=e^{-p\f}|e^{(p+1)\f}\omega_1|_{g}=e^{\f}|\omega_1|_{g}=e^\f,$$ thus showing that $\f$ is constant, contradicting the fact that $g$ and $\tilde g$ are non-homothetic. This concludes the proof of the theorem.
\end{proof}

\section{Triple warped products}

In this last section we treat the case, left open in Theorem \ref{main}, of conformally related non-homothetic metrics with reducible holonomy. With start with some necessary definitions.

\begin{defi}\label{def1} Let $(M_i,g_i)$, $i\in\{1,2,3\}$ be three connected Riemannian manifolds of positive dimension and $\f$ a (non-constant) function on $M_2$. The {\em triple warped product} associated to this data is the Riemannian manifold $M:=M_1\times M_2\times M_3$ endowed with the metric $g:=g_1+g_2+e^{-2\f}g_3$. The function $\f$ is called {\em warping function}.
\end{defi}
A triple warped product manifold is thus a Riemannian product, with one factor being itself a warped product. The nice feature of triple warped product metrics is, as noticed in the introduction, that in their conformal class there is a second (non-homothetic) triple warped product metric: 
$$\tilde g:=e^{2\f}g=e^{2\f}(g_1+g_2)+g_3=g_3+e^{2\f}g_2+e^{2\f}g_1,$$
which is the triple warped product metric associated to the Riemannian manifolds $(M_3,g_3)$, $(M_2,e^{2\f}g_2)$, $(M_1,g_1)$, and to the warping function $-\f$.

By an abuse of language, we will use the same terminology for the following slightly more general notion:

\begin{defi} A Riemannian metric $g$ on a manifold $M$ is called a triple warped product metric if there exist:
\begin{itemize}
\item a Riemannian metric $g'$ on $M$;
\item a decomposition of $\T M=T_1\oplus T_2\oplus T_3$ as a direct sum of three distributions which are mutually orthogonal with respect to $g'$ and $\nabla^{g'}$-parallel;
\item a non-constant function $\f$ on $M$ whose differential $\d\f$ vanishes on $T_1\oplus T_3$ such that $g=g_1+g_2+e^{-2\f}g_3$, where $g_i$ denotes the restriction of $g'$ to $T_i$.
\end{itemize}
\end{defi}
By the de Rham decomposition theorem, a Riemannian metric $g$ on a manifold $M$ is a triple warped product metric if and only if the universal cover $(\hat M,\hat g)$ is a triple warped product with warping function invariant by $\pi_1(M)$. 
By the above remarks, each conformal class of triple warped product metric contains two reducible Riemannian metrics. The aim of this section is to show that up to finite coverings, the converse is true in the compact case:

\begin{thm} \label{reducible} Let $M$ be a compact manifold carrying two conformally related non-homo\-thetic Riemannian metrics $g$ and $\tilde g:=e^{2\f}g$ which have both reducible holonomy. Then up to a finite covering, $g$ is a triple warped product on $M$ with warping function $\f$.
\end{thm}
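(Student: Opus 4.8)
The plan is to follow the strategy outlined in the introduction, using the description of conformal Killing forms on reducible compact manifolds (Proposition~\ref{ckp}) in place of the classification of Killing forms on special-holonomy manifolds used for Theorem~\ref{main}.

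\emph{Step 1 (reductions).} Write $\tilde g=e^{2\f}g$ with $\f$ non-constant. Applying Lemma~\ref{hol}~(iii) first to $(M,g)$ and then, passing to a further finite covering, to $(M,\tilde g)$ (still calling the total space $M$, on which $\f$ is still non-constant), I may assume that $M$ is oriented and that $\T M$ admits an orthogonal splitting $\T M=T_1\oplus T_2$ into non-trivial oriented $\nabla^g$-parallel distributions, and also an orthogonal splitting $\T M=S_1\oplus S_2$ into non-trivial oriented $\nabla^{\tilde g}$-parallel distributions; since $g$ and $\tilde g$ are conformal, $g$- and $\tilde g$-orthogonality coincide. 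More generally, one may have to run the following argument with each of the parallel distributions occurring in the local de Rham decompositions of $g$ and of $\tilde g$.

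\emph{Step 2 (conformal Killing forms from volume forms).} Let $\omega_i\in\Omega^{p_i}(M)$ be the volume form of $T_i$ with respect to $g$, where $p_i:=\rk T_i$ and $p_1+p_2=n$; these forms are $\nabla^g$-parallel and $\ast_g\omega_1=\pm\omega_2$. By Lemma~\ref{conf-inv}, $\tilde\omega_i:=e^{(p_i+1)\f}\omega_i$ is a conformal Killing $p_i$-form on $(M,\tilde g)$, and a short computation using \eqref{dpsi} and \eqref{deltapsi} gives $\ast_{\tilde g}\tilde\omega_1=\pm\tilde\omega_2$ together with
$$\d\tilde\omega_1=(p_1+1)\,e^{(p_1+1)\f}\,\d\f\wedge\omega_1,\qquad\delta^{\tilde g}\tilde\omega_1=-(p_2+1)\,e^{(p_1-1)\f}\,\grad_g\f\lrcorner\,\omega_1,$$
so that $\tilde\omega_1$ is closed exactly when $\f$ is constant along $T_2$, and is a Killing form exactly when $\f$ is constant along $T_1$. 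Symmetrically, the volume forms $\sigma_j\in\Omega^{q_j}(M)$ of $S_j$ with respect to $\tilde g$ are $\nabla^{\tilde g}$-parallel, and $\hat\sigma_j:=e^{-(q_j+1)\f}\sigma_j$ are conformal Killing forms on $(M,g)$.

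\emph{Step 3 (the key orthogonality — the main obstacle).} By Proposition~\ref{ckp} applied to $(M,\tilde g)$ with the splitting $\T M=S_1\oplus S_2$, every conformal Killing form on $(M,\tilde g)$, in particular $\tilde\omega_1$, is a sum of $\nabla^{\tilde g}$-parallel forms, basic Killing forms with respect to $S_1$ or $S_2$, and Hodge duals of these; the same applies to the $\hat\sigma_j$ on $(M,g)$ with respect to $T_1\oplus T_2$. The heart of the proof — and the step I expect to be the main difficulty — is to combine these two decompositions, degree by degree, and conclude that, after possibly relabelling the factors, $T_1$ is orthogonal to $S_1$ and $\f$ is constant along both $T_1$ and $S_1$. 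The ingredients I would use are: a form basic with respect to $S_i$ is locally a pullback from a de Rham factor, so its interior and exterior products against the splitting are rigid; a $\nabla^{\tilde g}$-parallel form is harmonic (hence closed, co-closed, and non-zero in cohomology), whereas a non-parallel basic Killing form $\kappa$ satisfies $\delta^{\tilde g}\kappa=0$ but $\d\kappa\ne0$, so parallel and genuine Killing contributions can be separated via their co-exact parts and de Rham classes; the pointwise decomposability of $\tilde\omega_1=e^{(p_1+1)\f}\omega_1$ and the relations $\omega_1\wedge\omega_2=\pm\,\d\mu^g$, $\sigma_1\wedge\sigma_2=\pm\,\d\mu^{\tilde g}$, $\ast_{\tilde g}\tilde\omega_1=\pm\tilde\omega_2$ strongly restrict which summands can be non-zero; and integration by parts (the vanishing of $\int_M\d\alpha\wedge\beta$ for $\beta$ closed) converts these restrictions into actual vanishing statements. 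This is where the ``tricky topological arguments'' of the introduction enter.

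\emph{Step 4 (conclusion).} Assume then that $T_1\perp S_1$ and that $\f$ is constant along $T_1$ and along $S_1$; put $W:=(T_1\oplus S_1)^{\perp}=T_2\cap S_2$, so that $\grad_g\f\in W$. On a small open set the de Rham theorem splits $(M,g)$ as $U_1\times U_2$ with $T_1=\T U_1$ and $g=g|_{U_1}\oplus g|_{U_2}$, and then $\f$ is a function on $U_2$ only; hence $\tilde g=e^{2\f}g$ on $U_1\times U_2$ is the warped product with base $(U_2,e^{2\f}g|_{U_2})$, fibre $(U_1,g|_{U_1})$ and warping function $e^{\f}$. Since $S_1\subset\T U_2$ is $\nabla^{\tilde g}$-parallel and $\f$ is constant along $S_1$, the warped-product formulas for the Levi-Civita connection show that $S_1$ is $\nabla^{e^{2\f}g|_{U_2}}$-parallel on $U_2$; applying the de Rham theorem once more, $(U_2,e^{2\f}g|_{U_2})$ splits as $V_1\times V_2$ with $S_1=\T V_2$, $W=\T V_1$, and $\f$ a function on $V_1$. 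Unwinding these identifications yields $g=g|_{T_1}+g|_{W}+e^{-2\f}(\tilde g|_{S_1})$, and the block metric $g':=g|_{T_1}\oplus g|_{W}\oplus\tilde g|_{S_1}$ is a Riemannian product for which $T_1$, $W$ and $S_1$ are parallel and $\d\f$ vanishes on $T_1\oplus S_1$; thus $g$ is a triple warped product with warping function $\f$. As the distributions $T_1,W,S_1$ and the function $\f$ are invariant under the deck transformations of the finite covering, this establishes the statement. The only delicate point is Step~3.
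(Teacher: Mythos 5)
Your Steps 1, 2 and 4 match the paper's architecture, but Step 3 is not a proof: it is a statement of the goal together with a wish-list of tools you ``would use''. Since you yourself identify this as the heart of the argument, the proposal has a genuine gap exactly where the theorem lives. Moreover, the ingredients you list (separating parallel from Killing summands via de Rham classes and co-exact parts, integration by parts against closed forms) are not the ones that make the paper's argument close, and it is not clear they suffice: the required conclusion is a \emph{pointwise} statement ($T_1\perp S_1$ at every point of $M$), and cohomological information alone does not obviously localize. One concrete omission already in Step 1: the paper normalizes so that $\rk(T_1)\le\rk(T_{12})$ and $\rk(T_1)\le\rk(T_3)$ (in your notation, $T_1$ of minimal rank among the four parallel distributions). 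Without this, the Hodge-dual summands $\tilde\psi_4,\tilde\psi_5$ in the Proposition~\ref{ckp} decomposition of $\tilde\psi=e^{(p+1)\f}\psi$ cannot be disposed of, and even the first conclusion $\d\f|_{T_1}=0$ is out of reach.

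For comparison, the paper's route through your Step 3 is: (a) with the rank normalization, the Hodge duals of basic Killing forms appearing in the decomposition of $\tilde\psi$ have basic degree at least the rank of the corresponding factor, hence are closed and therefore parallel; so $\tilde\psi$ is co-closed, and formula \eqref{deltapsi} gives $\grad_g\f\lrcorner\psi=0$, i.e.\ $\d\f|_{T_1}=0$. (b) Running the same decomposition for the volume forms of $T_3$ and $T_{12}$ yields the pointwise identities \eqref{omega} and \eqref{sigma}, whose right-hand sides lie in $\Lambda^{q-1}T_3^*$ and $\Lambda^{n-q-1}T_{12}^*$ respectively and hence have vanishing wedge product; this forces, at each point, $\grad_g\f\in T_{12}$ or $\grad_g\f\in T_3$, and on the set where $\grad_g\f$ lies in exactly one of them, \eqref{omega} or \eqref{sigma} forces $T_1\subset T_3$ or $T_1\subset T_{12}$ there. (c) On the interior of the set where $\d\f=0$ the two Levi-Civita connections agree, so a parallel-transport argument propagates the inclusion from the boundary; a covering argument by the closed sets $C_1=\{T_1\subset T_3\}$ and $C_2=\{T_1\subset T_{12}\}$, which are disjoint, plus connectedness of $M$, gives $C_2=M$ (after relabelling) and then $\d\f|_{T_3}=0$. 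You would need to supply an argument of comparable strength before Step 4 can start, so as it stands the proposal is incomplete.
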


\begin{proof} Lemma \ref{hol} (iii) shows that by replacing $M$ with a finite covering if necessary, there exists a $g$-orthogonal $\nabla^g$-parallel decomposition $\T M=T_1\oplus T_{23}$ and a $\tilde g$-orthogonal $\nabla^{\tilde g}$-parallel decomposition $\T M=T_{12}\oplus T_3$ (the notations will start making sense later on). Moreover, up to a change in notations, we may assume that $\rk(T_1)\le\rk(T_{12})$ and $\rk(T_1)\le\rk(T_3)$. 

For the moment being, we have no information about the relative position of these distributions in $\T M$. However, using again the theory of conformal Killing forms, we will show that in fact $T_1$ is contained either in $T_{12}$ or in $T_3$.

To see this, let us denote by $p$ the rank of $T_1$, and consider the volume form $\psi$ of $T_1$ defined by $g$. Since $\psi$ is parallel on $(M,g)$, Lemma \ref{conf-inv} shows that $\tilde\psi:=e^{(p+1)\f}\psi$ is a conformal Killing $p$-form on the reducible manifold $(M,\tilde g)$. By Proposition \ref{ckp}, $\tilde\psi$ can be written as a sum  $\tilde\psi=\tilde\psi_1+\tilde\psi_2+\tilde\psi_3+\tilde\psi_4+\tilde\psi_5$, where:
\begin{itemize}
\item $\tilde\psi_1$ is a parallel $p$-form on $(M,\tilde g)$.
\item $\tilde\psi_2$ is a Killing $p$-form on $(M,\tilde g)$ which is basic with respect to $T_{12}$ and $\tilde\psi_3$ is a Killing $p$-form on $(M,\tilde g)$ which is basic with respect to $T_{3}$.
\item $\tilde\psi_4$ is the Hodge dual on $(M,\tilde g)$ of a Killing $(n-p)$-form on $(M,\tilde g)$ which is basic with respect to $T_{12}$ and $\tilde\psi_5$ is the Hodge dual on $(M,\tilde g)$ of a Killing $(n-p)$-form on $(M,\tilde g)$ which is basic with respect to $T_{3}$.
\end{itemize}

From our dimensional assumption, $n-p\ge\rk(T_{12})$ and $n-p\ge\rk(T_3)$, and since every basic Killing form of maximal degree is automatically closed, thus parallel, we deduce that $\tilde\psi_4$ and $\tilde\psi_5$ are parallel (or vanish identically if the inequalities are strict), so finally $\tilde\psi$ is the sum of parallel and Killing forms on $(M,\tilde g)$, in particular it is co-closed (i.e. in the kernel of $\delta^{\tilde g}$). 

Using the general formula \eqref{deltapsi} relating the co-differentials of $g$ and $\tilde g$, and the fact that $\psi$ is parallel on $(M,g)$, we deduce that $(\grad_g\f)\lrcorner \psi=0$, thus showing that $\d\f(X)=0$ for every $X\in T_1$.

Consider now the volume form $\tilde\omega$ of the distribution $T_3$ with respect to $\tilde g$. We denote by $q$ the rank of $T_3$. Since $\tilde\omega$ is parallel on $(M,\tilde g)$, Lemma \ref{conf-inv} shows that $\omega:=e^{-(q+1)\f}\tilde\omega$ is conformal Killing on $(M,g)$. By Proposition \ref{ckp} again, $\omega$ can be written as a sum  $\omega=\omega_1+\omega_2+\omega_3+\omega_4+\omega_5$, where:
\begin{itemize}
\item $\omega_1$ is a parallel $q$-form on $(M,g)$.
\item $\omega_2$ is a Killing $q$-form on $(M,g)$ which is basic with respect to $T_1$, and $\omega_3$ is a Killing $q$-form on $(M,g)$ which is basic with respect to $T_{23}$
\item $\omega_4$ is the Hodge dual on $(M, g)$ of a Killing $(n-q)$-form on $(M,g)$ which is basic with respect to $T_1$, and $\omega_5$ is the Hodge dual on $(M, g)$ of a Killing $(n-q)$-form on $(M,g)$ which is basic with respect to $T_{23}$.
\end{itemize}

Like before, the dimensional assumption $p\le q$ and $p\le n-q$ show that $\omega_2$ and $\omega_4$ are parallel (or vanish identically if the inequalities are strict). We deduce that $\delta^g\omega=\delta^g\omega_5$. Let $\omega_6$ be the Killing form on $(M,g)$ which is basic with respect to $T_{23}$ such that $\omega_5=*_g\omega_6$. From \eqref{identity} we get $\delta^g\omega_5= \omega_7\wedge\psi$, where $\omega_7$ is basic with respect to $T_{23}$ (recall that $\psi$ is the volume form of $T_1$ with respect to $g$). On the other hand, from \eqref{deltapsi} and the fact that $\tilde\omega$ is parallel (thus co-closed) on $(M,\tilde g)$, we get $\delta^g\omega=(n-q+1)(\grad_g\f)\lrcorner\omega$, so finally
\begin{equation}\label{omega}\omega_7\wedge\psi=(n-q+1)(\grad_g\f)\lrcorner\omega.\end{equation}

Repeating this argument, this time starting with the volume form $\tilde\sigma$ of $T_{12}$ with respect to $\tilde g$, and denoting by $\sigma:=e^{-(n-q+1)\f}\tilde\sigma$, we obtain the existence of a form $\sigma_7$ which is basic with respect to $T_{23}$ such that 
\begin{equation}\label{sigma}\sigma_7\wedge\psi=(q+1)(\grad_g\f)\lrcorner\sigma.\end{equation}

We now consider the following closed subsets of $M$:
$$M_1:=\{x\in M\ |\ (\grad_g\f)_x\in T_3\},\qquad M_2:=\{x\in M\ |\ (\grad_g\f)_x\in T_{12}\},$$
and 
$$C_1:=\{x\in M\ |\ (T_1)_x\subset (T_3)_x\},\qquad C_2:=\{x\in M\ |\ (T_1)_x\subset (T_{12})_x\}.$$

Since $(\grad_g\f)\lrcorner\omega\in \Lambda^{q-1}T_3^*$, $(\grad_g\f)\lrcorner\sigma\in \Lambda^{n-q-1}T_{12}^*$, and their exterior product vanishes by \eqref{omega} and \eqref{sigma}, we deduce that at each point of $x\in M$, either $(\grad_g\f)_x\lrcorner\omega_x=0$ (i.e. $(\grad_g\f)_x\in T_{12}$), or $(\grad_g\f)_x\lrcorner\sigma_x=0$ (i.e. $(\grad_g\f)_x\in T_{3}$). This just means that $M_1\cup M_2=M$. 

For every $x\in M_1\setminus M_2$ we have $(\grad_g\f)_x\in T_3\setminus\{0\}.$ Using \eqref{omega} we can write at $x$:
$$\omega=\frac{\d\f}{\d\f(\grad_g\f)}\wedge (\grad_g\f)\lrcorner\omega=\frac{\d\f}{(n-q+1)\d\f(\grad_g\f)}\wedge\omega_7\wedge\psi.$$
As $\omega$ is a volume form of $T_3$ and $\psi$ is a volume form of $T_1$, this shows that $(T_1)_x\subset (T_3)_x$, whence $M_1\setminus M_2\subset C_1.$ Since $C_1$ is closed, we also have 
\begin{equation}\label{m1}\overline{M_1\setminus M_2}\subset C_1.\end{equation}
Similarly, we obtain using \eqref{sigma} that 
\begin{equation}\label{m2}\overline{M_2\setminus M_1}\subset C_2.\end{equation}

\begin{lemma}
The interior of $M_1\cap M_2$ is contained in $C_1\cup C_2$. 
\end{lemma}
\begin{proof} For every $x\in \mathrm{Int}(M_1\cap M_2)$ there exists a smooth path $c:[0,1]\to M$ such that $c(0)=x$, $c([0,1))\subset  \mathrm{Int}(M_1\cap M_2)$ and $$y:=c(1)\in (M_1\cap M_2)\setminus \mathrm{Int}(M_1\cap M_2)\subset( \overline{M_1\setminus M_2})\cup (\overline{M_2\setminus M_1})\subset C_1\cup C_2.$$

Now, since $\d\f=0$ on $M_1\cap M_2$, the Levi-Civita connections of $g$ and $\tilde g$ are the same on $M_1\cap M_2$, so the parallel transport along $c$ with respect to $\nabla^g$ coincides with the parallel transport with respect to $\nabla^{\tilde g}$. If $y\in C_1$, we have by definition $(T_1)_y\subset (T_3)_y$ and since $T_1$ is $\nabla^g$-parallel and $T_3$ is $\nabla^{\tilde g}$-parallel, we obtain that $T_1\subset T_3$ at each point $c(t)$, in particular for $t=0$ which shows that $x\in C_1$. Similarly, if $y\in C_2$ we obtain $x\in C_2$, thus proving the lemma.
\end{proof}
As a consequence of the fact that $C_1$ and $C_2$ are closed, we thus obtain:
\begin{equation}\label{m3}\overline{\mathrm{Int}(M_1\cap M_2)}\subset C_1\cup C_2.\end{equation}
Now, for every closed sets $M_1$ and $M_2$ with $M_1\cup M_2=M$ one has 
$$(\overline{M_1\setminus M_2})\cup(\overline{M_2\setminus M_1})\cup(\overline{\mathrm{Int}(M_1\cap M_2)})=M.$$
From \eqref{m1}, \eqref{m2} and \eqref{m3} we get $C_1\cup C_2=M$. On the other hand, $C_1$ is obviously disjoint from $C_2$, so by the connectedness of $M$ we have $C_1=M$ or $C_2=M$. Up to a switch of notations between $T_3$ and $T_{12}$, we can therefore assume that $C_1=\emptyset$ and $C_2=M$, i.e. $T_1$ and $T_3$ are orthogonal at each point of $M$. Moreover, from \eqref{m1} we get $M_1\subset M_2$, i.e. $M_2=M$, thus showing that $\d\f(X)=0$ for every $X\in T_3$.

We now define $T_2$ as the orthogonal complement with respect to $g$ (or $\tilde g$) of $T_1\oplus T_3$. Since $T_2=T_1^\perp\cap T_3^\perp$ is the intersection of two integrable distributions, it is integrable. We have thus shown that the tangent bundle of $M$ decomposes in a direct sum $\T M=T_1\oplus T_2\oplus T_3$ of integrable, mutually orthogonal distributions, and that the differential of the conformal factor $\f$ vanishes on $T_1$ and $T_3$. 

Since $T_1$ and $T_{23}$ are $\nabla^g$-parallel orthogonal distributions, the restrictions $g_1$ and $g_{23}$ of $g$ to $T_1$ and $T_{23}$ are basic with respect to $T_1$ and $T_{23}$ respectively, and $g=g_1+g_{23}$. Similarly, one can write $e^{2\f}g=\tilde g=g_{12}+g_3$, where $g_{12}$ and $g_3$ are basic symmetric bilinear forms on $T_{12}$ and $T_3$ respectively. We thus get $g_1+g_{23}=e^{-2\f}(g_{12}+g_3)$, which also reads:
$$g_{23}-e^{-2\f}g_3=e^{-2\f}g_{12}-g_1.$$
Since the left hand side vanishes on $T_1$ and the right hand side vanishes on $T_3$, we see that the symmetric tensor $g_2:=g_{23}-e^{-2\f}g_3=e^{-2\f}g_{12}-g_1$ vanishes on $T_1\oplus T_3$. We thus have $g_{23}=e^{-2\f}g_3+g_2$, whence 
\begin{equation}\label{g}g=g_1+g_2+e^{-2\f}g_3,\end{equation} 
and in particular $g_i$ is a positive definite symmetric bilinear form on $T_i$ for $i=1,2,3$. 

Moreover, the expression $g_2=g_{23}-e^{-2\f}g_3$ shows that $g_2$ is constant in the directions of $T_1$ (in the sense that the Lie derivative of $g_2$ in the direction of vector fields tangent to $T_1$ vanishes). Similarly, the formula $g_2=e^{-2\f}g_{12}-g_1$ shows that $g_2$ is constant in the directions of $T_3$. Consequently, $g_2$ is basic with respect to $T_2$ and the theorem is proved.
\end{proof}

\end{document}